\newtheorem{theorem}{Theorem}[section]
\newtheorem{lemma}[theorem]{Lemma}
\theoremstyle{remark}
\begin{document}
\title{Shimura lift of Rankin-Cohen brackets of eigenforms and theta series}
\author{Wei Wang}
\address{Department of Mathematics, Shaoxing University, Shaoxing 312000, China}
\email{weiwang\_math@163.com}

\begin{abstract}
The Shimura lift of a Hekce eigenform multiplied by a theta series is the square of the form. We extend this result by generalizing the product map to the Rankin-Cohen bracket. We prove that the Shimura lift of Rankin-Cohen bracket of an eigenform and a theta series is given by  Rankin-Cohen bracket of the eigenform and itself.
\end{abstract}
\keywords{Shimura lift, Rankin-Cohen bracket, theta series}
\subjclass[2020]{11F32, 11F37}
\maketitle

\section{Introduction}
The  half-integral weight modular forms were first systematically studied by Shimura in \cite{MR0332663}. Shimura gave a correspondence between certain modular forms of even weight $2k$ and modular forms of half-integral weight $k+1/2$. To describe it, we fix some standard notation. For $t\in\mathbb{Z}$, denote by $\chi_t$ the primitive character corresponding to the field extension $\mathbb{Q}(t^{1/2})/\mathbb{Q}$. Suppose that $f(z)=\sum_{n=1}^{\infty}a(n)q^n\in S_{k+1/2}(4N,\chi)$ is a  half-integral weight modular form, where $k$ is a positive integer. Let $t$ be a square-free positive integer. Define complex numbers $A_t(n)$ by
\[
\sum_{n\geq 1}A_t(n)n^{-s}:=L(s-k+1,\chi\chi^k_{-4}\chi_t)\sum_{n\geq 1}a(tn^2)n^{-s},
\]
then the function
\[
S_t(g(z)):=\sum_{n\geq 1}A_t(n)q^n
\]
is in $M_{2k}(2N,\chi^2)$.  Furthermore, $S_t(g(z))$ is a cusp form if $k\geq 2$. The map $S_t$ is called the Shimura lift.

A typical example of half-integral weight modular forms is given by the theta series. Let $\psi$ be a Dirichlet character modulo $r$ and let $\psi(-1)=(-1)^v$, $v=0$ or 1 according as $\psi$ is even or odd. For a positive integer $d$, define 
\[
\theta_{\psi}(dz)=\sum_{n\in\mathbb{Z}}n^v\psi(n)q^{dn^2}.
\]
We have $\theta_{\psi}(dz)\in M_{v+1/2}(4r^2d,\psi\chi_{d}\chi_{-4}^v)$, moreover, if $\psi(-1)=-1$, $\theta_{\psi}(dz)$ is a cusp form. By the graded algebraic structure of modular forms, for a modular form $f$ of integral weight, the product of $f$ and a theta series is a half-integral weight modular form. Another way to produce a new half-integral weight modular form is using Rankin-Cohen brackets. Let $f$ and $g$ be two smooth functions and let $k$ and $l$ be two integers or half-integers. The $w$-th Rankin–Cohen bracket for $f$ and $g$ is defined as
\[
[f,g]_{w}:=(2\pi i)^{-w}\sum_{0\leq j\leq w} (-1)^j \binom{k+w-1}{w-j} \binom{l+w-1}{j}\frac{d^jf}{dz^j} \frac{d^{w-j}g}{dz^{w-j}}.
\]
The 0-th Rankin–Cohen bracket of two modular forms is the product of these
two modular forms. The following theorem given by Cohen \cite{MR0382192} shows that Rankin-Cohen brackets in a sense is a generalization of the product map. 
\begin{theorem}[Cohen]
Let $f$, $g$ be two modular forms of weights $k$, $l$ and characters 
$\chi_1$, $\chi_2$ respectively, on a subgroup $\Gamma$ of $\operatorname{SL}_2(\mathbb{Z})$. Then $[f,g]_w\in M_{k+l+2w}(\Gamma, \chi_1\chi_2\chi)$
where $\chi=1$ if $k_1$ and $k_2\in \mathbb{Z}$, $\chi=\chi_{-4}^{k_i}$ if $k_i\in \mathbb{Z}$ and $k_{3-i}\in 1/2+\mathbb{Z}$, $\chi=\chi_{-4}^{k_1+k_2}$ if $k_1$ and $k_2\in 1/2+\mathbb{Z}$. Furthermore, $[f,g]_w$ is a cusp form if $w>0$. 
\end{theorem}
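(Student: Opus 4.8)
The only content of the theorem is the transformation law, since holomorphy on $\mathfrak{h}$ is clear ($[f,g]_w$ is a polynomial in the holomorphic functions $f^{(j)},g^{(i)}$) and holomorphy at the cusps is clear too: differentiating a $q$-expansion $\sum_{n\ge 0}c(n)q^{n/M}$ term by term gives $\sum_{n\ge 1}\tfrac{2\pi i\,n}{M}c(n)q^{n/M}$, still supported on non-negative exponents. The plan for the transformation law is to prove the \emph{equivariance} identity $[f,g]_w|_{k+l+2w}\gamma=[\,f|_k\gamma,\ g|_l\gamma\,]_w$ (for $\gamma\in\operatorname{SL}_2(\mathbb{Z})$, with the theta multiplier in the slash when a weight is half-integral): taking $\gamma\in\Gamma$ turns the right-hand side into $[f,g]_w$ times the product of the two automorphy constants of $f$ and $g$, which after expanding the automorphy factors is precisely the weight $k+l+2w$ with character $\chi_1\chi_2\chi$. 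For cuspidality, note that when $w>0$ each summand $f^{(j)}g^{(w-j)}$ of $[f,g]_w$ has zero constant term — if $j\ge 1$ then $f^{(j)}$ does, and if $j=0$ then $w-j\ge 1$ so $g^{(w)}$ does — so $[f,g]_w$ vanishes at $\infty$; running the same computation with $f,g$ replaced by their expansions at any other cusp (equivalently, using the equivariance identity) shows $[f,g]_w$ vanishes there as well.

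To establish the equivariance identity I would use the Cohen--Kuznetsov formal series. For a smooth $h$ and a weight parameter $\kappa$, put
\[
\widetilde{h}_{\kappa}(z,T):=\sum_{n\ge 0}\frac{h^{(n)}(z)}{n!\,(\kappa)_n\,(2\pi i)^n}\,T^n ,\qquad (\kappa)_n:=\kappa(\kappa+1)\cdots(\kappa+n-1),
\]
and write $j_{\kappa}(\gamma,z)$ for the weight-$\kappa$ automorphy factor, equal to $(cz+d)^{\kappa}$ when $\kappa\in\mathbb{Z}$ and to $j(\gamma,z)^{2\kappa}$ for the theta multiplier $j(\gamma,z)$ when $\kappa\in\tfrac12+\mathbb{Z}$; in both cases $\tfrac{d}{dz}\log j_{\kappa}(\gamma,z)=\tfrac{\kappa c}{cz+d}$. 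An induction on $n$ — using nothing beyond $\tfrac{d}{dz}\big(h|_{\kappa}\gamma\big)=\big(h'+\tfrac{\kappa c}{cz+d}h\big)\big|_{\kappa+2}\gamma$ — yields
\[
\widetilde{(h|_{\kappa}\gamma)}_{\kappa}(z,T)\;=\;j_{\kappa}(\gamma,z)^{-1}\,\exp\!\Big(\frac{c\,T}{2\pi i\,(cz+d)}\Big)\,\widetilde{h}_{\kappa}\!\Big(\gamma z,\ \frac{T}{(cz+d)^2}\Big).
\]
Applying this to $f$ with parameter $T$ and to $g$ with parameter $-T$, the two exponential factors cancel in the product, so $\widetilde{f}_{k}(z,T)\,\widetilde{g}_{l}(z,-T)$ transforms under $\gamma$ by $j_{k}(\gamma,z)\,j_{l}(\gamma,z)$ up to $T\mapsto T/(cz+d)^2$; extracting the coefficient of $T^{w}$ supplies the remaining $(cz+d)^{2w}$. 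A short Pochhammer computation identifies this coefficient as $(-1)^{w}(k)_{w}(l)_{w}$ times $[f,g]_w$ — a nonzero multiple — which gives the equivariance identity and the weight $k+l+2w$.

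There remains the precise shape of the character. For $k,l\in\mathbb{Z}$ the automorphy factor produced is simply $(cz+d)^{k+l+2w}$ with character $\chi_1\chi_2$. In the half-integral cases one invokes the single relation $j(\gamma,z)^{2}=\chi_{-4}(d)\,(cz+d)$: when $k\in\mathbb{Z}$, $l\in\tfrac12+\mathbb{Z}$, rewriting $(cz+d)^{k}=\chi_{-4}(d)^{k}\,j(\gamma,z)^{2k}$ and collecting terms gives the theta-type factor $j(\gamma,z)^{2(k+l+2w)}$ with the extra character $\chi_{-4}^{k}$, and the case $k,l\in\tfrac12+\mathbb{Z}$ is entirely analogous and yields $\chi_{-4}^{k+l}$, matching the theorem. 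The main obstacle, such as it is, is purely bookkeeping: carrying out the induction for the Cohen--Kuznetsov transformation rule cleanly, and tracking the multiplier systems through the half-integral cases; there is no conceptual difficulty. An elementary alternative that sidesteps the formal series is to differentiate $f(\gamma z)=\chi_1(\gamma)j_{k}(\gamma,z)f(z)$ and $g(\gamma z)=\chi_2(\gamma)j_{l}(\gamma,z)g(z)$ directly, $j$ and $w-j$ times, and check that the error terms carrying extra powers of $c/(cz+d)$ cancel in the weighted sum defining $[f,g]_w$; this recasts the theorem as a Chu--Vandermonde-type binomial identity, which would then be the crux.
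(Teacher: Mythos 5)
The paper does not prove this statement at all: it is quoted as Cohen's theorem with a citation to \cite{MR0382192}, so there is no internal argument to compare yours against; your proposal has to stand on its own, and in outline it does. The Cohen--Kuznetsov generating-series argument you sketch is the standard proof of the transformation law (it is essentially Zagier's treatment rather than Cohen's original computation, which verifies the covariance of the bracket more directly by induction on derivatives), and your reductions — holomorphy at cusps by termwise differentiation of $q$-expansions, cuspidality for $w>0$ from each summand $f^{(j)}g^{(w-j)}$ having vanishing constant term at every cusp via the equivariance identity, and the character bookkeeping through $j(\gamma,z)^2=\chi_{-4}(d)(cz+d)$ — are all sound. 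Three small points to tidy up if you write this out: (i) the derivative rule as literally written is off, since the factor $\tfrac{\kappa c}{cz+d}$ depends on $\gamma$ and should not itself be slashed; the correct identity is $\tfrac{d}{dz}\bigl(h|_{\kappa}\gamma\bigr)=(h')|_{\kappa+2}\gamma-\tfrac{\kappa c}{cz+d}\,\bigl(h|_{\kappa}\gamma\bigr)$, which is what the induction actually uses and which fixes the sign of the exponential factor; (ii) the constant is inverted — the coefficient of $T^w$ in $\widetilde f_k(z,T)\,\widetilde g_l(z,-T)$ equals $\tfrac{(-1)^w}{(k)_w(l)_w}[f,g]_w$, not $(-1)^w(k)_w(l)_w[f,g]_w$ — though, as you say, only nonvanishing matters; (iii) in the half-integral cases the theta multiplier only exists for $\Gamma\subseteq\Gamma_0(4)$, a hypothesis implicit in the theorem's statement that you should make explicit, and the weight-$1/2$ or $3/2$ Pochhammer symbols $(\kappa)_n$ are still nonzero, so the series make sense there.
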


The main result of this paper is to describe the precise image of the Shimura lift of Rankin-Cohen brackets of a normalized eigenform and a theta series. Recall that a normalized eigenform $f\in M_k(N,\chi)$ is an eigenfunction of all the  Hecke operators. In other words, its Fourier coefficient satisfy that $a(1)=1$ and for all positive integers $m,n$,
\[
a(m)a(n)=\sum_{d\mid \gcd(m,n)}\chi(d)d^{k-1}a(mn/d^2),
\]
or equivalently,
\begin{equation}\label{equation_newform}
a(mn)=\sum_{d\mid \gcd(m,n)}\mu(d)\chi(d)d^{k-1}a(m/d)a(n/d).
\end{equation}
Non-cuspidal Hecke eigenforms are Eisenstein series, and their Fourier coefficients are easy to obtain. The cuspidal Hecke eigenforms are called newforms. Unlike  Eisenstein series, we know little about Fourier coefficients of newforms.

For a modular form $f$ with Fourier expansion $\sum_{n\geq 0}a(n)q^n$ and a Dirichlet character $\psi$, the $\psi$-twist of $f$ is defined as
\[
f_{\psi}(z):=\sum_{n\geq 0}\psi(n)a(n)q^n.
\]
Our main result is as follows.
\begin{theorem}\label{Main_theorem}
Let $f=\sum_{n\geq 0}a(n)q^n\in M_k(N,\chi)$ be a normalized Hecke eigenform. Let $\psi$ be a Dirichlet character modulo $r$. Write $\prod_{i=1}^l\psi_{p_i^{\alpha_i}}$ be the decomposition of $\psi$. For divisor $d$ of $r$ such that $\gcd(d,r/d)=1$, we write $\psi_d=\prod_{p_i\mid d}\psi_{p_i^{\alpha_i}}$. Let $t$ be a square-free positive integer such that $t\mid N$ and $\gcd(t,r)=1$. The $t$-Shimura lift of the modular form $\left[f(4rz),\theta_{\psi}(tz)\right]_w\in S_{k+2w+v+1/2}(4N^{'}r^2,\psi\chi\chi_t\chi_{-4}^{k+v})$, where $N^{'}=N/\gcd(N,r)$, is
\[
\binom{2w+v}{w}\binom{k+2w+v-1}{k+w-1}^{-1}a(t)t^w(g_{\psi}(z)-\psi(2)\chi(2)2^{k+2w+v-1}g_{\psi}(2z)),
\]
where the modular form $g(z)$ is given by
\[
\sum_{\substack{d\mid r \\ \gcd(d,r/d)=1}}\psi_{d}(-1)\left[f(rz/d),f(dz)\right]_{2w+v}.
\]
\end{theorem}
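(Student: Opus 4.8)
The plan is to prove the identity Fourier coefficient by Fourier coefficient, comparing the $n$-th coefficient of $S_t\big([f(4rz),\theta_\psi(tz)]_w\big)$ with that of the claimed form. First I would record the membership $h(z):=[f(4rz),\theta_\psi(tz)]_w\in S_{k+2w+v+1/2}(4N'r^2,\psi\chi\chi_t\chi_{-4}^{k+v})$ directly from Cohen's theorem and the transformation laws of $f(4rz)$ (weight $k$, character $\chi$) and $\theta_\psi(tz)\in M_{v+1/2}(4r^2t,\psi\chi_t\chi_{-4}^v)$; the level $4N'r^2$ with $N'=N/\gcd(N,r)$ falls out of $\operatorname{lcm}(4rN,4r^2t)$ using $t\mid N$ and $\gcd(t,r)=1$. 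Writing $f(4rz)=\sum a(n)q^{4rn}$, $\theta_\psi(tz)=\sum_{n\in\mathbb Z}n^v\psi(n)q^{tn^2}$ and using $\frac{d^j}{dz^j}q^m=(2\pi i m)^jq^m$, the $m$-th coefficient of $h$ becomes
\[
b(m)=\sum_{4rn_1+tn_2^2=m}P_w(4rn_1,tn_2^2)\,a(n_1)\,n_2^v\psi(n_2),\qquad P_w(X,Y)=\sum_{j=0}^w(-1)^j\binom{k+w-1}{w-j}\binom{v-\tfrac12+w}{j}X^jY^{w-j}.
\]
Applying the definition of $S_t$ with half-integral weight $k+2w+v+\tfrac12$, the twisting character $(\psi\chi\chi_t\chi_{-4}^{k+v})\chi_{-4}^{k+2w+v}\chi_t$ collapses (the two powers of $\chi_{-4}$ cancel, as do the two $\chi_t$'s) to $\psi\chi$, so that $A_t(n)=\sum_{d\mid n}\psi(d)\chi(d)\,d^{\,k+2w+v-1}\,b\big(t(n/d)^2\big)$.

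Next I would substitute $m=t(n/d)^2$: the relation $4rn_1=t\big((n/d)^2-n_2^2\big)$ together with $\gcd(t,r)=1$ forces $n_1=t(n/d-n_2)(n/d+n_2)/(4r)$ and a congruence on $n_2$ that I resolve by CRT — the $2$-part gives $n_2\equiv n/d\pmod 2$, and each odd prime power $p_i^{\alpha_i}\parallel r$ gives $n_2\equiv\varepsilon_i(n/d)\pmod{p_i^{\alpha_i}}$ with $\varepsilon_i=\pm1$. Recording the sign pattern by the unitary divisor $d'=\prod_{\varepsilon_i=-1}p_i^{\alpha_i}\mid r$, on each block $\psi(n_2)=\psi_{d'}(-1)\,\psi(\widetilde n_2)$ for the appropriate residue representative $\widetilde n_2$, and this is exactly what builds the outer sum $\sum_{d'\mid r,\ \gcd(d',r/d')=1}\psi_{d'}(-1)(\cdots)$ and the shape $[f(rz/d'),f(d'z)]_{2w+v}$. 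Then comes the core matching. On the right I expand $[f(rz/d'),f(d'z)]_{2w+v}$ into coefficients $P_{2w+v}\big((r/d')m_1,d'm_2\big)a(m_1)a(m_2)$ over $(r/d')m_1+d'm_2=n$ (and over $n/2$ for the $g_\psi(2z)$ term), and linearise via $a(m_1)a(m_2)=\sum_{e\mid\gcd(m_1,m_2)}\chi(e)e^{k-1}a(m_1m_2/e^2)$; on the left I peel $a(t)$ off $a(n_1)$ using $a(t)a(M)=\sum_{e\mid\gcd(t,M)}\chi(e)e^{k-1}a(tM/e^2)$, which is legitimate since $t$ is square-free and coprime to $r$. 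After these reductions the $\chi(d)d^{k-1}$ part of the Shimura weight is absorbed into the Hecke relations, $\psi(d)$ cancels against $\psi$ of the divided variable to leave an overall factor $\psi(n)$, and both sides become the same divisor-summed bilinear expression in the $a(\cdot)$'s; the claimed equality then reduces to one closed identity, namely that $P_w$ evaluated at $\big(t((n/d)^2-n_2^2),\,tn_2^2\big)$ and summed against the residual divisor sum reproduces $P_{2w+v}$ of $f$ with itself, times precisely $\binom{2w+v}{w}\binom{k+2w+v-1}{k+w-1}^{-1}t^w$ (the surplus factor $d^{2w+v}$ in $d^{k+2w+v-1}$ being absorbed by homogeneity of $P_{2w+v}$).

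I expect the main obstacle to be exactly this last polynomial/hypergeometric identity, which is what pins down the constant $\binom{2w+v}{w}\binom{k+2w+v-1}{k+w-1}^{-1}$; I would isolate it as a separate lemma and prove it by generating functions or induction on $w$, using the duplication formula $(a)_{2m}=4^m(a/2)_m((a+1)/2)_m$ for Pochhammer symbols to convert the half-integral binomials $\binom{v-\tfrac12+w}{j}$ into integral ones. Everything else is careful but routine: the level and character of $h$; the CRT sign analysis yielding the sum over unitary divisors $d'\mid r$ weighted by $\psi_{d'}(-1)$; and the arithmetic at $2$, where the constraint $4\mid(n/d)^2-n_2^2$ restricts which parities of $n_2$ occur and disentangling the divisor sum $\sum_{d\mid n}$ against this constraint is exactly what produces the correction term $-\psi(2)\chi(2)2^{k+2w+v-1}g_\psi(2z)$. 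A final point to verify is the degenerate case $\gcd(n,r)>1$, where the right-hand side vanishes through $\psi(n)$ while the left-hand side vanishes because the contributing $\theta$-terms have $\psi(n_2)=0$.
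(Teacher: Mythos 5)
Your plan follows essentially the same route as the paper's proof: compute the $tn^2$-th Fourier coefficient of the bracket, use the Hecke relation \eqref{equation_newform} both to peel off $a(t)$ and to linearize $a(m_1)a(m_2)$ via M\"obius inversion, decompose the theta-sum over $m$ according to unitary divisors of $r$ (your CRT sign analysis is the paper's decomposition by $d=\gcd((n-m)/2,r)$, with the non-unitary $d$ killed by $\psi$ and the sign recorded by $\psi_{d}(-1)$), and reduce the matching of the weight-$w$ bracket against the weight-$(2w+v)$ bracket, including the constant $\binom{2w+v}{w}\binom{k+2w+v-1}{k+w-1}^{-1}$, to a single binomial identity; this is precisely Lemma \ref{lemma_comident}, which the paper proves by the same generating-function device you propose (expanding $(1+xt)^{k+2w-1+v}(1+yt)^{k+2w-1+v}$ in two ways).

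The one point you would need to correct in execution is the provenance of the term $-\psi(2)\chi(2)2^{k+2w+v-1}g_{\psi}(2z)$. It does not come from the parity constraint $4\mid (n/d)^2-n_2^2$: that constraint is entirely consumed by the substitution $m=n-2dm'$ and leads to the clean relation $\sum_{n}b(tn^2)n^{-s}=C(t)\,L(s-K+1,\psi\chi)^{-1}\sum_n\psi(n)c(n)n^{-s}$ with $K=k+2w+v$, with $\psi\chi$ at its natural modulus. The correction term arises solely because in the definition of $S_t$ the twisting character is $\psi\chi$ induced to the modulus $4N'r^2$; thus your formula $A_t(n)=\sum_{d\mid n}\psi(d)\chi(d)d^{K-1}b(t(n/d)^2)$ must be read with this induced character, which vanishes at even $d$. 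The quotient $L(s-K+1,(\psi\chi)^{'})/L(s-K+1,\psi\chi)=1-\psi(2)\chi(2)2^{K-1-s}$ is exactly the removed Euler factor at $2$, and it alone produces the $g_{\psi}(2z)$ term. With that bookkeeping put right, your outline reproduces the paper's argument.
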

We illustrate this theorem with an explicit example. Let $f(z)=\eta^4(z)\eta^4(5z)$ be the newform in $S_4(5)$ with $q$-expansion
\[
f=q - 4q^2 + 2q^3 + 8 q^4 - 5q^5 - 8q^6 + 6q^7 - 23q^9 + O(q^{10}).
\]
Take $\psi=\left(\frac{12}{\cdot}\right)$ to be the primitive even Dirichlet character modulo 12, then $\theta_{\psi}(z)=2\eta(24z)\in M_{1/2}(576,\psi)$, and let $g$ be $2[f(z),f(12z)]_2-2[f(3z),f(4z)]_2$ with $q$-expansion
\[
g=100q^7 - 640q^{10} + 1040q^{11} + 2020q^{13} - 640q^{14} - 7500q^{17} - 16140q^{19} +O(q^{20}).
\]
Theorem \ref{Main_theorem} tells us that the Shimura lift of 
$[f(48z),\eta(24z)]_1\in S_{6+1/2}(2880,\psi)$ is $g_{\psi}/5$.

In the following, we focus our attention on a subspace of half-integral weight modular forms, namely Kohnen's plus space. Let $N$ be a positive odd integer and $\chi$ a Dirichlet character modulo $N$ with $\chi(-1)=(-1)^v$. The Kohnen plus space, denoted $S_{k+1/2}^{+}(N,\chi)$, is the subspace of $S_{k+1/2}(4N,\chi\chi_{-4}^v)$ consisting of all cusp forms $f$ whose $n$-th Fourier coefficient vanishes unless $(-1)^{k+v}n\equiv 0,1\mod 4$. Suppose that $f(z)=\sum_{n\geq 1}a(n)q^n\in S^{+}_{k+1/2}(N,\chi)$. Let $D$ be a fundamental discriminant with $(-1)^{k+v}D>0$. The Shimura lift 
\[
S_{D}^{+}(f(z)):=\sum_{n\geq 1}A_{D}(n)q^n
\]  
is defined by the Dirichlet series
\[
a(|D|)\sum_{n\geq 1}A_{D}(n)n^{-s}=L(s-k+1,\chi\chi_{D})\sum_{n\geq 1}a(|D|n^2)n^{-s},
\]
then $S_{D}^{+}(f(z))\in S_{2k}(N,\chi^2)$. When $N$ is odd squarefree and $\chi$ is quadratic, Kohnen showed there is a newform theory attached to the plus space $S^{+}_{k+1/2}(N,\chi)$ and there is a linear combination of the Shimura lift  $S_{D}^{+}$ which maps the space of newforms of weight $k+1/2$ isomorphically onto the space of newforms of weight $2k$ for $\Gamma_0(N)$. For more details, see \cite{MR0660784}. Considering Kohnen's plus space is quite natural, because Rankin-Cohen brackets of eigenforms and theta series fall within this subspace. Our theorem in the sense of Kohnen's plus space is as follows.
\begin{theorem}\label{theorem_plus}
Let $f=\sum_{n\geq 0}a(n)q^n\in M_k(N,\chi)$ be a normalized Hecke eigenform. Let $\psi$ be a Dirichlet character modulo $r$ with $\psi(-1)=(-1)^v$. Let $D$ be a fundamental discriminant such that $D>0$, $D\mid N$ and $\gcd(D,r)=1$. The $D$-Shimura lift $S_{D}^{+}$ of the modular form $\left[f(4rz),\theta_{\psi}(Dz)\right]_w\in S^{+}_{k+2w+v+1/2}(N^{'}r^2,\psi\chi\chi_{D})$, where $N^{'}=N/\gcd(N,r)$, is
\[
\binom{2w+v}{w}\binom{k+2w+v-1}{k+w-1}^{-1}D^wg_{\psi}(z),
\]
where the modular form $g(z)$ is given by
\[
\sum_{\substack{d\mid r \\ \gcd(d,r/d)=1}}\psi_{d}(-1)\left[f(rz/d),f(dz)\right]_{2w+v}.
\]
\end{theorem}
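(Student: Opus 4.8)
The plan is to deduce Theorem~\ref{theorem_plus} from Theorem~\ref{Main_theorem}. Write $\kappa=k+2w+v$ and $F=[f(4rz),\theta_{\psi}(Dz)]_{w}$. First I would verify the asserted membership $F\in S^{+}_{\kappa+1/2}(N'r^{2},\psi\chi\chi_{D})$: the weight, level and character are immediate from Cohen's theorem (for the level one uses $D\mid N$ and $\gcd(D,r)=1$ to see that the levels of $f(4rz)$ and $\theta_{\psi}(Dz)$ both divide $4N'r^{2}$), so the only genuinely new point is the plus-space support condition. Since $f(4rz)$ is supported on exponents in $4r\mathbb{Z}$ and $\theta_{\psi}(Dz)$ on the exponents $Dm^{2}$, $m\in\mathbb{Z}$, and $D$ is a positive odd fundamental discriminant, hence $D\equiv 1\pmod 4$, every exponent $4r\mu+Dm^{2}$ of $F$ satisfies $4r\mu+Dm^{2}\equiv Dm^{2}\equiv 0$ or $1\pmod 4$, which is exactly the plus-space condition for this weight and character.

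Next I would compare Kohnen's lift $S^{+}_{D}$ with the classical Shimura lift $S_{|D|}$. Since $D\mid N$ is squarefree, positive and coprime to $r$, Theorem~\ref{Main_theorem} applies with $t=D$ and gives
\[
S_{|D|}(F)=\binom{2w+v}{w}\binom{k+2w+v-1}{k+w-1}^{-1}a(D)\,D^{w}\bigl(\operatorname{id}-\psi(2)\chi(2)2^{\kappa-1}V_{2}\bigr)g_{\psi},
\]
where $V_{2}h(z)=h(2z)$. On the other hand, the plus-space condition on $F$ is precisely what powers Kohnen's descent from the even level $2N'r^{2}$, where the classical lift naturally lives, down to the odd level $N'r^{2}$: it forces $S_{|D|}(F)$ to be old at $2$, coming from level $N'r^{2}$, so there is a relation of the shape $S_{|D|}(F)=\bigl(\operatorname{id}-(\psi\chi\chi_{D}^{2})(2)\,2^{\kappa-1}V_{2}\bigr)\bigl(a_{F}(|D|)\,S^{+}_{D}(F)\bigr)$. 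As $2\nmid D$ we have $\chi_{D}^{2}(2)=1$, hence $(\psi\chi\chi_{D}^{2})(2)=\psi(2)\chi(2)$ and the operator coincides with the one above.

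Comparing the two expressions for $S_{|D|}(F)$ and using that $\operatorname{id}-\alpha V_{2}$ is injective on $q\,\mathbb{C}[[q]]$ (from $(\operatorname{id}-\alpha V_{2})\sum_{n\ge1}c_{n}q^{n}=0$ one reads off $c_{n}=0$ for $n$ odd and $c_{n}=\alpha c_{n/2}$ for $n$ even, so induction on the $2$-adic valuation of $n$ forces every $c_{n}=0$), one may cancel the operator and obtain
\[
a_{F}(|D|)\,S^{+}_{D}(F)=\binom{2w+v}{w}\binom{k+2w+v-1}{k+w-1}^{-1}a(D)\,D^{w}g_{\psi},
\]
which is the assertion of Theorem~\ref{theorem_plus} once the normalizing coefficient $a_{F}(|D|)$ is pinned down. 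I expect the main obstacle to be exactly this comparison step made rigorous in the generality needed here: the classical-versus-plus-space relation is recorded in the literature only for odd squarefree level and quadratic character, so one must re-derive it for the level $N'r^{2}$ and the character $\psi\chi\chi_{D}$ and keep track of the normalization. A cleaner alternative, which I would develop in parallel, is to bypass the comparison and rerun the Fourier-coefficient computation underlying Theorem~\ref{Main_theorem} directly for $S^{+}_{D}$: because $N'r^{2}$ is odd no power of $2$ ever enters, so the correction term $-\psi(2)\chi(2)2^{\kappa-1}g_{\psi}(2z)$ never appears, and one is reduced to the same Vandermonde-type binomial identity among the Rankin--Cohen coefficients together with the Hecke relation~\eqref{equation_newform} that drive the proof of Theorem~\ref{Main_theorem}.
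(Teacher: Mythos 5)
Your ``cleaner alternative'' is precisely the paper's proof: the paper dispatches Theorem \ref{theorem_plus} in one sentence, observing that the Fourier-coefficient computation of Theorem \ref{Main_theorem} goes through verbatim and that the only change is that in the definition of $S^{+}_{D}$ the character $\psi\chi$ lives at modulus $N'r^{2}$ instead of $4N'r^{2}$, so the quotient of $L$-functions that previously produced the Euler factor $(1-\psi(2)\chi(2)2^{k+2w+v-1-s})$ is now $1$ and the $g_{\psi}(2z)$ term never appears. Your primary route (deduce the plus-space statement from Theorem \ref{Main_theorem} and cancel the injective operator $I-\alpha B(2)$) is also viable, and in fact easier than you fear: no plus-space oldform/newform theory at the prime $2$ is needed, because both lifts are defined in this paper by explicit Dirichlet series, and dividing the two defining $L$-functions gives the relation between $S_{|D|}(F)$ and $S^{+}_{D}(F)$ at once; the appeal to ``Kohnen's descent'' and to $S_{|D|}(F)$ being old at $2$ is a heuristic that you neither need nor prove. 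Your check of the plus-space support condition is a worthwhile addition that the paper only asserts.

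The genuine gap is the normalization you defer. The factor you call $a_{F}(|D|)$ is, with the paper's written definition of $S^{+}_{D}$, the $|D|$-th Fourier coefficient of the bracket $F=[f(4rz),\theta_{\psi}(Dz)]_{w}$ itself; a direct computation (only $m=\pm1$ and $j=0$ contribute) gives this coefficient as $2\binom{k+w-1}{w}D^{w}a(0)$, which vanishes whenever $f$ is a cusp form. So it cannot simply be ``pinned down'' to the value $a(D)$ that your comparison would need in order to turn the factor $a(D)D^{w}$ coming from Theorem \ref{Main_theorem} into the factor $D^{w}$ appearing in Theorem \ref{theorem_plus}, and the relation as you wrote it, with $a_{F}(|D|)$ multiplying $S^{+}_{D}(F)$, degenerates in the main case of interest. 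Reconciling the presence of $a(t)$ in Theorem \ref{Main_theorem} with its absence here is exactly the delicate piece of bookkeeping, tied to which normalization of the Kohnen lift is in force, and your argument leaves it unresolved; the direct recomputation (your parallel route, i.e.\ the paper's proof) is where this has to be settled explicitly rather than postponed.
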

Selberg in an unpublished work showed that when $f\in S_k(\operatorname{SL}_2(\mathbb{Z}))$ is a normalized Hecke eigenform, then the image of the function $f(4z)\theta(z)$ under $S_1$ is $f^2(z)-2^{k-1}f^2(2z)$. Kohnen and Zagier \cite{MR0629468} extended it to $S^{+}_{D}$ for certain Eisenstein series, and this is the most crucial step in the proof of their main theorem. Cipra generalized Selberg's work in \cite{MR1002114}, and he gave the precise image of the Shimura lift of the product of a newform of level $N$ and a theta series attached to a primitive Dirichlet character modulo a prime power. It is worth noting that Rankin-Cohen brackets has already appeared in Cipra's theorem when the character is odd. Later on, Hansen and Naqvi in \cite{MR2456838} generalized Cipra's result by removing the condition that the modulus of the character is a prime power. In 2021, Pandey and Ramakrishnan \cite{MR4287581} generalized the above results to $t$-Shimura lift when $t\neq 1$ is a square-free positive integer. A special case of Theorem \ref{theorem_plus} has already been provided by Popa in \cite{Popa2011RationalDO}. Choie, Kohnen, and Zhang \cite{choie2024rankincohenbracketsheckeeigenforms} corrected Popa's constant and used this result to obtain formulas related Rankin-Cohen brackets of normalized
Hecke eigenforms and certain shifted convolution series.

The second result of this paper mainly focuses on the case where $\psi$ is a trivial character. More general result can be obtained in this case. To state this result, we make some notational conventions. For a positive integer $d$, let $\chi_{0,d}$ be the trivial character modulo $d$. For a modular form $f$ and $a,b\in\mathbb{R}$, $l\in\mathbb{N}$, define
\[
f(z)\big|(aI+bB(l)):=af(z)+bf(lz).
\]
Let $f(z)=\sum_{n\geq 0}a(n)q^n\in M_k(N,\chi)$ be a normalized Hecke eigenform and $r$ be a positive integer. Let $t$ be a square-free positive integer such that $t\mid N$ and $\gcd(t,r)=1$. Then $\left[f(4rz),\theta(tz)\right]_{w}$ is in $M_{k+2w+1/2}(4Nr,\chi\chi_t\chi_{-4}^{k})$. Moreover, this modular form can be placed in the space $M_{k+2w+1/2}(M,\chi\chi_t\chi_{-4}^{k})$, for any positive integer $M$ such that $4Nr\mid M$. The following theorem  provides the expression for the Shimura lift of $\left[f(4rz),\theta(tz)\right]_{w}$ in the space $S_{k+2w+1/2}(M,\chi\chi_t\chi_{-4}^{k})$. For integer $n$, let $\operatorname{rad}(n)$ denote the radical of $n$ defined as the product of the distinct prime numbers dividing $n$. 
\begin{theorem}\label{Main_theorem2}
With the above notation and assumptions. The $t$-Shimura lift of the cusp form $\left[f(4rz),\theta(tz)\right]_w\in S_{k+2w+1/2}(M,\chi\chi_t\chi_{-4}^{k})$ is
\[
\binom{2w}{w}\binom{k+2w-1}{k+w-1}^{-1}a(t)t^w\sum_{d\mid r}[f(rz/d),f_{\chi_{0,r/d}}(dz)]_{2w}\bigg|\prod_{p\mid M_d}(I-\chi(p)p^{k+2w-1}B(p)),
\]
where $M_d:=2\operatorname{rad}(M)/\operatorname{rad}(r/d)$.
\end{theorem}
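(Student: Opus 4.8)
The plan is to establish the identity by computing the Fourier coefficients of $S_t\bigl([f(4rz),\theta(tz)]_w\bigr)$ directly and then recognising the resulting $q$-expansion. It is convenient to treat the minimal level $M=4Nr$ first and then pass to a general $M$ with $4Nr\mid M$ via the standard description of how the Shimura lift changes under level raising; but since both steps rest on the same Dirichlet-series manipulation I would run them together. First I would record the Fourier expansion of the bracket. Since $\frac{d}{dz}q^n=2\pi i\,n\,q^n$, writing $f=\sum_{\nu\geq 0}a(\nu)q^\nu$ and $\theta(tz)=\sum_{b\in\mathbb Z}q^{tb^2}$ gives $[f(4rz),\theta(tz)]_w=\sum_{m\geq1}c(m)q^m$ with
\[
c(tm^2)=t^w\!\!\!\sum_{\substack{b\in\mathbb Z,\ \nu\geq 1\\ 4r\nu=t(m-b)(m+b)}}\!\!\!a(\nu)\,P_w\bigl((m-b)(m+b),\,b^2\bigr),\qquad
P_w(X,Y):=\sum_{j=0}^{w}(-1)^j\binom{k+w-1}{w-j}\binom{w-\tfrac12}{j}X^jY^{w-j}.
\]
Only the coefficients at indices $tm^2$ enter $S_t$, so this is the quantity to understand.

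Next I would apply $S_t$. Put $\kappa:=k+2w$ and let $\Psi$ be the Dirichlet character appearing in the definition of $S_t$ for a form of weight $\kappa+\tfrac12$ and nebentypus $\chi\chi_t\chi_{-4}^{k}$; one checks $\Psi$ agrees with $\chi$ up to principal characters, so that $L(s-\kappa+1,\Psi)$, computed modulo the level $M$, equals $L(s-\kappa+1,\chi_{\mathrm{prim}})\prod_{p}(1-\chi(p)p^{\kappa-1-s})$ over a suitable finite set of primes dividing $M$. Comparing Dirichlet series yields $A_t(n)=\sum_{e\mid n}\Psi(e)e^{\kappa-1}c\bigl(t(n/e)^2\bigr)$. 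Substituting the formula for $c$ and using $\gcd(t,r)=1$, set $x=m-b$, $y=m+b$, so that $4r\nu=txy$. The key step is to turn $a(\nu)=a(txy/4r)$ into a \emph{product} of two values of $a$, one carried by $x$ and one by $y$: distributing the prime powers of $r$ between the two variables and applying Hecke multiplicativity \eqref{equation_newform} prime by prime, each way $r=(r/d)\cdot d$ of splitting the primes of $r$ contributes a term in which $x$ is indexed by $f(rz/d)$ and $y$ by $f(dz)$; the scalar $a(t)\,t^w$ is peeled off here from the index $t$ together with the explicit $t^w$ above.

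It remains to collapse the inner sums. After $x=m-b$, $y=m+b$ the weight $P_w((m-b)(m+b),b^2)$ is a homogeneous polynomial of degree $2w$ in $x,y$, and a Vandermonde/Saalsch\"utz-type hypergeometric identity turns the single $j$-sum, summed over the solution set, into the universal constant $\binom{2w}{w}\binom{k+2w-1}{k+w-1}^{-1}$ times the coefficient pattern
\[
\sum_{i=0}^{2w}(-1)^i\binom{k+2w-1}{2w-i}\binom{k+2w-1}{i}x^iy^{2w-i}
\]
of $[\,\cdot\,,\,\cdot\,]_{2w}$ for two weight-$k$ forms. Finally the divisor sum over $e$ together with the extra Euler factors coming from the level must be sorted: for primes $p\mid M$ not dividing $r/d$ they assemble into $I-\chi(p)p^{\kappa-1}B(p)$ (the prime $2$ always occurring, which is Selberg's factor), whereas for primes $p\mid r/d$ the analogous local contribution combines with $f(dz)$ and upgrades it to the trivial-character twist $f_{\chi_{0,r/d}}(dz)$; this dichotomy simultaneously produces the twist and pins down the exact modulus $M_d=2\operatorname{rad}(M)/\operatorname{rad}(r/d)$.

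The main obstacle is precisely this last stretch of bookkeeping: keeping the Hecke splitting of $a(txy/4r)$ compatible with the hypergeometric collapse of $P_w$, and then tracking \emph{exactly} which local operators survive at each prime, for each $d\mid r$, so as to obtain the modulus $M_d$ and the twist $f_{\chi_{0,r/d}}$ rather than a coarser or finer version. One must also handle the prime $2$ with care — it behaves atypically in $\theta$, in the quadratic characters, and in the level — and, once the $q$-expansion is identified, confirm that the output lies in $S_{2k+4w}(M,\chi^2)$: cuspidality is immediate from $w\geq1$ by Cohen's theorem, and membership in level $M$ is exactly what the level-raising analysis of $S_t$ delivers.
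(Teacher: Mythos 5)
Your overall strategy coincides with the paper's: expand $b(tn^2)$, split the Fourier coefficient $a(\cdot)$ using the Hecke relation \eqref{equation_newform}, collapse the $j$-sum by the binomial identity of Lemma \ref{lemma_comident} to recognize the $[\,\cdot\,,\cdot\,]_{2w}$ pattern, and then compare Dirichlet series against the $L$-factor in the definition of $S_t$ to get the finite Euler product. However, there is a genuine gap precisely where you yourself place ``the main obstacle'': the origin of the sum over \emph{all} $d\mid r$, of the twist $f_{\chi_{0,r/d}}$, and of the exclusion of the primes $p\mid r/d$ from $M_d$. These do not come from ``applying Hecke multiplicativity prime by prime'' to distribute the prime powers of $r$ between $x=m-b$ and $y=m+b$, and the twist does not appear at the end from local contributions at $p\mid r/d$. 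Summing one term for ``each way $r=d\cdot(r/d)$'' overcounts: a fixed pair $(x,y)$ with $4r\mid txy$ is compatible with several factorizations, and \eqref{equation_newform} by itself does not decide how $r$ is to be divided out. The mechanism the paper uses (and which any correct version of your sketch must reproduce) is elementary and comes first: partition the theta-sum according to the exact gcd $d=\gcd((n-m)/2,r)$, i.e.\ substitute $m=n-2dm'$ with $\gcd(m',r/d)=1$; that coprimality condition \emph{is} the factor $\chi_{0,r/d}(m')$, so the twist sits in the $d$-th term from the very start, before any $L$-function enters. Only afterwards does \eqref{equation_newform} act, once to peel off $a(t)$ and once, through its M\"obius $\delta$-sum (supported on $\delta\mid m'$, hence on $\delta$ coprime to $r/d$), to produce the factor $L(s-k-2w+1,\chi\chi_{0,r/d})^{-1}$. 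The finite product $\prod_{p\mid M_d}(1-\chi(p)p^{k+2w-1-s})$ then arises as the ratio of the Shimura-lift $L$-function to this one, and the primes $p\mid r/d$ are excluded because they are already absent from that denominator --- not because their local factors ``combine with $f(dz)$ and upgrade it to the twist.''

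Without this exact-gcd bookkeeping your outline would at best yield a coarser identity (with overcounted cross terms and no twist), so as written the argument does not reach the stated formula; note also that this is exactly the point where Theorem \ref{Main_theorem2} differs from Theorem \ref{Main_theorem}, where the character $\psi$ of conductor $r$ kills the terms with $\gcd(d,r/d)>1$ and no twist is needed. The remaining ingredients of your sketch are sound and match the paper: the convolution $A_t(n)=\sum_{e\mid n}\Psi(e)e^{k+2w-1}b(t(n/e)^2)$, the observation that $\Psi$ agrees with $\chi$ up to principal characters, and the hypergeometric collapse giving the constant $\binom{2w}{w}\binom{k+2w-1}{k+w-1}^{-1}$, which is exactly Lemma \ref{lemma_comident} (you should prove it or cite it rather than wave at a Saalsch\"utz-type identity).
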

Regarding this result, we also illustrate it with an example. For prime $p\Vert N$, let $f\in S_k(N/p)$ be a newform. By Theorem \ref{Main_theorem2}, the Shimura lift of the modular form $f(4pz)\theta(z)\in S_{k+1/2}(4N)$ is 
\[
\left(f(pz)f_{\chi_{0,p}}(z)+f(z)f(pz)-p^{k-1}f(pz)f(p^2z)\right)\big|(I-\chi_{0,N}(2)2^{k-1}B(2)).
\]
Since $f$ is an eigenform of the Hecke operator $T_p$, $f_{\chi_{0,p}}(z)=f(z)-a_p(f)f(pz)+p^{k-1}f(p^2z)$, where $a_p(f)$ denotes the $p$-th Fourier coefficient of $f$. Therefore the Shimura lift of $f(4pz)\theta(z)$ is 
\[2f(z)f(pz)-a_p(f)f^2(pz)-\chi_{0,N}(2)2^{k-1}(2f(2z)f(2pz)-a_p(f)f^2(2pz)).\]

By the theory of newforms, the space $S_{k}(M,\chi)$ can be decomposed into direct sums of newforms and oldforms:
\[
S_{k}(M,\chi)=\bigoplus_{f(\chi)\mid N\mid M}\bigoplus_{d\mid M/N}S_{k}^{\text{new}}(N,\chi)\big|B(d),
\]
here $f(\chi)$ denotes the conductor of $\chi$. Once the spaces of newforms in the above decomposition can be obtained, Theorem \ref{Main_theorem2} actually provides a complete description of the Shimura lift of half-integral weight modular forms of the form $[S_{k}(M,\chi)\big|B(4),\theta]_w$. The space of Eisenstein series also has a corresponding decomposition, so the result can be extended to the modular form space.
Similarly, Theorem \ref{Main_theorem2} also has a version for the Kohnen's plus space, which can be obtained by dividing $M_d$ by 2 in Theorem \ref{Main_theorem2}. We will not describe it in detail.
\section{proof of theorems}
Before proceeding to prove the theorem, we establish the following lemma.
\begin{lemma}\label{lemma_comident}
Let $k,w$ be two positive integers and $v\in\left\{0,1\right\}$. We have the following combinatorial identity regarding variables $x,y$:
\begin{align*}
\sum_{j=0}^w&\frac{(k+2w-1+v)!}{j!(2w-2j+v)!(k+j-1)!}(xy)^j(x+y)^{2w-2j+v}\\
&=\sum_{j=0}^{2w+v}\binom{k+2w-1+v}{j}\binom{k+2w-1+v}{2w-j+v}x^jy^{2w-j+v}
\end{align*}
\end{lemma}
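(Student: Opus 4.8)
The plan is to recognise both sides as the coefficient of one fixed power of an auxiliary variable $t$ in a single polynomial, and then to extract that coefficient in two different ways. Put $m:=k+2w+v-1$ and $n:=2w+v$, so that $m-n=k-1\geq 0$ and both sides of the asserted identity are homogeneous of degree $n$ in $x$ and $y$. I would show that each side equals the coefficient of $t^{2m-n}$ in
\[
(x+t)^m(y+t)^m=\bigl((x+t)(y+t)\bigr)^m=\bigl(t^2+(x+y)t+xy\bigr)^m .
\]
(Here $2m-n=2k+2w+v-2\geq 0$ and $\leq 2m$, so this coefficient is meaningful.)

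First I would treat the right-hand side of the lemma. Expanding each factor separately, the coefficient of $x^{a}t^{m-a}$ in $(x+t)^m$ is $\binom{m}{a}$ and that of $y^{b}t^{m-b}$ in $(y+t)^m$ is $\binom{m}{b}$; multiplying, a term contributes to $t^{2m-n}$ exactly when $(m-a)+(m-b)=2m-n$, i.e.\ $a+b=n$. Hence the coefficient of $t^{2m-n}$ is $\sum_{a+b=n}\binom{m}{a}\binom{m}{b}x^{a}y^{b}$, which upon relabelling $a=j$, $b=2w-j+v$ is precisely the right-hand side of the lemma.

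Next I would compute the same coefficient from the trinomial form. Expanding $\bigl(t^2+(x+y)t+xy\bigr)^m$ by the multinomial theorem over triples $(i,j,\ell)$ with $i+j+\ell=m$, where $i$, $j$, $\ell$ are the exponents of $xy$, $(x+y)t$, $t^2$ respectively, the power of $t$ in the corresponding term is $j+2\ell$; extracting $t^{2m-n}$ forces $j+2\ell=2m-n$. Combined with $i+j+\ell=m$ this gives $\ell=m-n+i=k-1+i$ and $j=n-2i$; the constraint $j\geq 0$ yields $i\leq w+v/2$, hence $i\leq w$ since $i$ is an integer, while $\ell\geq 0$ holds automatically because $k\geq 1$, and conversely every $i\in\{0,1,\dots,w\}$ produces a valid triple. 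The matching term is
\[
\frac{m!}{i!\,(n-2i)!\,(k-1+i)!}\,(xy)^{i}(x+y)^{n-2i},
\]
and summing over $0\leq i\leq w$ reproduces the left-hand side of the lemma verbatim (set $i=j$ and note $n-2i=2w-2j+v$, $k-1+i=k+j-1$, $m=k+2w-1+v$).

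I expect essentially no hard step here: the only points requiring care are checking that the index $i$ in the multinomial expansion runs over exactly $\{0,\dots,w\}$, so that the two index sets genuinely coincide, and matching the multinomial coefficient $m!/(i!\,j!\,\ell!)$ with the coefficient appearing in the statement. For completeness I note an alternative, slightly more computational route: compare the coefficient of $x^{a}y^{b}$ (with $a+b=n$) on the two sides directly; expanding $(x+y)^{n-2j}$ shows the left-hand coefficient is $\sum_{j}\tfrac{m!}{j!\,(a-j)!\,(b-j)!\,(k-1+j)!}$, and after clearing factorials its equality with $\binom{m}{a}\binom{m}{b}$ reduces (using $m-a=k-1+b$, $m-b=k-1+a$) to Vandermonde's convolution $\sum_{j}\binom{a}{j}\binom{k-1+b}{b-j}=\binom{k-1+a+b}{b}$. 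The generating-function reading above is the cleaner of the two, so that is the one I would write up.
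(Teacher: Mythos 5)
Your proof is correct and is essentially the paper's own argument: the paper expands $(1+xt)^{k+2w-1+v}(1+yt)^{k+2w-1+v}$ both as a product of binomial series and as the trinomial power $(1+(x+y)t+xyt^2)^{k+2w-1+v}$, then compares coefficients of $t^{2w+v}$, which is your computation up to the trivial renormalization $t\mapsto 1/t$ (i.e. $(x+t)^m(y+t)^m$ versus $(1+xt)^m(1+yt)^m$) and the use of the multinomial theorem in place of an iterated binomial expansion. No gaps; the index bookkeeping $\ell=k-1+i$, $j=n-2i$, $0\le i\le w$ matches the paper's identification of the coefficient $\binom{m}{j}\binom{m-j}{2w+v-2j}$ with the factorial expression in the statement.
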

\begin{proof}
By the binomial theorem, we have
\begin{align*}
(1+xt)^{k+2w-1+v}(1+yt)^{k+2w-1+v}&=\sum_{i,j}\binom{k+2w-1+v}{j}\binom{k+2w-1+v}{i}x^jy^it^{i+j}\\
&=\sum_{n}\sum_{j=0}^n\binom{k+2w-1+v}{j}\binom{k+2w-1+v}{n-j}x^{n-j}y^it^{n}.
\end{align*}
On the other hand,
\begin{align*}
(1+xt)^{k+2w-1+v}&(1+yt)^{k+2w-1+v}=(1+(x+y)t+xyt^2)^{k+2w-1+v}\\
&=\sum_j\binom{k+2w-1+v}j(xy)^jt^{2j}(1+(x+y)t)^{k+2w-1+v-j}\\
&=\sum_{i,j}\binom{k+2w-1+v}j\binom{k+2w-1+v-j}{i}(xy)^j(x+y)^it^{i+2j}\\
&=\sum_{n}\sum_{j=0}^{\lfloor n/2\rfloor}\binom{k+2w-1+v}j\binom{k+2w-1+v-j}{n-2j}(xy)^j(x+y)^{n-2j}t^n.
\end{align*}
By comparing the coefficients of $t^{2w+v}$ in the above two equations and noting
\[
\binom{k+2w-1+v}j\binom{k+2w-1+v-j}{2w+v-2j}=\frac{(k+2w-1+v)!}{j!(2w-2j+v)!(k+j-1)!},
\]
the proof is complete.
\end{proof}

We now begin the proof of Theorem \ref{Main_theorem}. Denote by $b(n)$ the $n$-th Fourier coefficient of $\left[f(4rz),\theta_{\psi}(tz)\right]_w$. By definition of the Rankin-Cohen bracket, the $tn^2$-th Fourier coefficient $b(tn^2)$ is 
\begin{equation}\label{tn2-fouriercoe}
\sum_{j=0}^w(-1)^j\binom{k+w-1}{w-j}\binom{w-1/2+v}{j}\sum_{m\in \mathbb{Z}}\psi(m)t^{w-j}m^{2w-2j+v}(tn^2-tm^2)^ja\left(\frac{tn^2-tm^2}{4r}\right).
\end{equation}
Here and throughout the proof, we assume that $a(n)=0$ unless $n$ is a positive integer. Since the modulus of $\psi$ is $r$, if $\gcd(m,r)>1$ or $t(n-m)(n+m)\notin 4r\mathbb{N}$, the expression (\ref{tn2-fouriercoe}) in the second summation is equal to 0 and it is a finite sum. By our assumption, $\gcd(t,r)=1$, $tn^2$-th Fourier coefficient of $f$ is 0 unless $2r\mid (n-m)(n+m)$, hence $n+m$ and $n-m$ must both be even. And since $f$ is an eigenform and $t\mid N$, equation (\ref{equation_newform}) implies that $a(t(n^2-m^2)/4r)=a(t)a((n^2-m^2)/4r)$. Let $\gcd((n-m)/2,r)=d$, thus $n\equiv m\mod 2d$ and $n\equiv -m\mod 2r/d$. If $\gcd(d,r/d)=d^{'}>1$, this implies that $m\equiv n\equiv -m \mod 2d^{'}$, hence $\psi(m)=0$ and the corresponding sum is equal to 0. Therefore, we can assume that $\gcd(d,r/d)=1$. Since $m\equiv n\mod d$ and $m\equiv -n \mod r/d$, we have
\[
\psi(m)=\psi_d(m)\psi_{r/d}(m)=\psi_d(n)\psi_{r/d}(-n)=\psi_{r/d}(-1)\psi(n).
\]
Substituting $m=n-2dm^{'}$ for $m^{'}\in\mathbb{Z}$ and $m+n=2n-2dm^{'}$ in (\ref{tn2-fouriercoe}), we get
\begin{equation}\label{eqeuation_coefficient_btn2}
\begin{aligned}
b(tn^2)=t^wa(t)\psi(n)\sum_{\substack{d\mid r \\ \gcd(d,r/d)=1}}\psi_{r/d}(-1)\sum_{j=0}^w(-1)^j\binom{k+w-1}{w-j}\binom{w-1/2+v}{j}
\\
\times\sum_{m\in\mathbb{Z}}(4dm)^j(n-dm)^j(n-2dm)^{2w-2j+v}a\left(\frac{m(n-dm)}{r/d}\right).
\end{aligned}
\end{equation}
Now since $f$ is an eigenform with character $\chi$,  by (\ref{equation_newform}), we find that
\begin{equation}\label{eqeuation_coefficient_btn2_2}
\begin{aligned}
&\sum_{m\in\mathbb{Z}}(4dm)^j(n-dm)^j(n-2dm)^{2w-2j+v}a\left(\frac{m(n-dm)}{r/d}\right)\\
&=\sum_{m\in\mathbb{Z}}(4dm)^j(n-dm)^j(n-2dm)^{2w-2j+v}\sum_{\delta\mid\gcd(m,n)}\mu(\delta)\chi(\delta)\delta^{k-1}a\left(\frac{m}{\delta}\right)a\left(\frac{n-dm}{\delta r/d}\right)\\
&=\sum_{\delta\mid n}\mu(\delta)\chi(\delta)\delta^{k+2w+v-1}\sum_{m\in\mathbb{Z}}(4dm)^j(n/\delta-dm)^j(n/\delta-2dm)^{2w-2j+v}a(m)a\left(\frac{n/\delta-dm}{r/d}\right).
\end{aligned}
\end{equation}
Letting $x=-dm$ and $y=n-dm$ in the identity in Lemma \ref{lemma_comident} and using
\[
\binom{w-1/2+v}j=4^{-j}\frac{(2w+v)!(w-j)!}{w!j!(2w-2j+v)!},
\]
we get 
\begin{align*}
&\sum_{j=0}^w(-1)^j\binom{k+w-1}{w-j}\binom{w-1/2+v}{j}(4dm)^j(n-dm)^j(n-2dm)^{2w-2j+v}\\
&=\frac{(k+w-1)!(2w+v)!}{(k+2w-1+v)!w!}\sum_{j=0}^{2w+v}(-1)^j\binom{k+2w-1+v}{2w-j+v}\binom{k+2w-1+v}{j}(dm)^j(n-dm)^{2w-j+v}.
\end{align*}
Multiply both sides of the above identity by $a(m)a\left(\frac{n-dm}{r/d}\right)$ and sum over $m\in\mathbb{Z}$. After summing, up to a constant, the right-hand side of the resulting expression is indeed the $n$-th Fourier coefficient of $[f(dz),f(rz/d)]_{2w+v}$.  Then, replace $n$ by $n/\delta$, multiply both sides of the resulting expression by $\psi_{r/d}(-1)$ and $\mu(\delta)\chi(\delta)\delta^{k+2w+v-1}$, and sum over $d\mid r,\gcd(d,r/d)=1$ and $\delta\mid n$. After the above calculations, the left-hand side of the equation equals $b(tn^2)/(t^wa(t)\psi(n))$ by (\ref{eqeuation_coefficient_btn2}) and (\ref{eqeuation_coefficient_btn2_2}), and the right-hand side of the equation equals
\[
\frac{(k+w-1)!(2w+v)!}{(k+2w-1+v)!w!}\sum_{\delta\mid n}\mu(\delta)\chi(\delta)\delta^{k+2w+v-1}\tilde{c}(n/\delta),
\]
where $\tilde{c}(n)$ denotes the $n$-th Fourier coefficient of 
\[
\sum_{\substack{d\mid r \\ \gcd(d,r/d)=1}}\psi_{r/d}(-1)\left[f(dz),f(rz/d)\right]_{2w+v}.
\]
Denote the $n$-th Fourier coefficient of $g(z)$ as $c(n)$. Using $\psi_{r/d}(-1)\psi_d(-1)=(-1)^v$ and $[f(rz/d),f(dz)]_{2w+v}=(-1)^v[f(dz),f(rz/d)]_{2w+v}$ and summarizing the above assertions, we find that 
\begin{equation}\label{equation_btn2}
b(tn^2)=\frac{(k+w-1)!(2w+v)!}{(k+2w-1+v)!w!}t^wa(t)\sum_{\delta\mid n}\mu(\delta)\chi(\delta)\delta^{k+2w+v-1}\psi(\delta)\psi(n/\delta)c(n/\delta).
\end{equation}
For simplicity of notation, we write $C(t)$ instead of
\[
\frac{(k+w-1)!(2w+v)!}{(k+2w-1+v)!w!}t^wa(t)=\binom{2w+v}{w}\binom{k+2w+v-1}{k+w-1}^{-1}t^wa(t).
\]
Using the language of Dirichlet series, for $\operatorname{Re}(s)$ sufficiently large, (\ref{equation_btn2}) is equivalent to
\[
\sum_{n\geq 1}b(tn^2)n^{-s}=C(t)L(s-(k+2w+v)+1,\psi\chi)^{-1}\sum_{n\geq 1}\psi(n)c(n)n^{-s}.
\]
It follows from the definition of the Shimura lift that
\begin{equation}\label{equation_shimura}
\sum_{n\geq 1}A_t(n)n^{-s}=C(t)\frac{L(s-(k+2w+v)+1,(\psi\chi)^{'})}{L(s-(k+2w+v)+1,\psi\chi)}\sum_{n\geq 1}\psi(n)c(n)n^{-s},
\end{equation}
where $(\psi\chi)^{'}$ is the Dirichlet character modulo $4N^{'}r^2$ induced by $\psi\chi$. By Euler products of the corresponding $L$-functions, we have
\[
\frac{L(s-(k+2w+v)+1,(\psi\chi)^{'})}{L(s-(k+2w+v)+1,\psi\chi)}=(1-\psi(2)\chi(2)2^{k+2w+v-1-s}).
\]
Comparing the coefficients of both sides of the Dirichlet series (\ref{equation_shimura}) completes the proof.

The proof of Theorem \ref{theorem_plus} is completely identical to the above. The only difference is that, in definition of Kohnen's Shimura lift, the modulus of the character $\psi\chi$ in the Dirichlet series $L(s-(k+2w+v)+1,\psi\chi)$ is $N^{'}r^2$ rather than $4N^{'}r^2$.

The proof of Theorem \ref{Main_theorem2} follows along the same lines. We only discuss the case where $w=0$ and $t=1$, leaving the general case to the readers. Let $b(n)$ denote the $n$-th Fourier coefficient of $f(4rz)\theta(z)$, then
\begin{equation}\label{equation_proof_thm2}
b(n^2)=\sum_{m\in\mathbb{Z}}a\left(\frac{n^2-m^2}{4r}\right)=\sum_{m\in\mathbb{Z}}a\left(\frac{(n-m)(n+m)}{4r}\right).
\end{equation}
Similar to the proof of Theorem 1, we decompose the summation over $m\in\mathbb{Z}$ according to the greatest common divisor of $(n-m)/2$ and $r$. Let $\gcd ((n-m)/2,r)=d$ and substituting $m=n-2dm^{'}$ for $\gcd (m^{'},r/d)=1$ in (\ref{equation_proof_thm2}), we get
\begin{align*}
b(n^2)&=\sum_{d\mid r}\sum_{m\in\mathbb{Z}}\chi_{0,r/d}(m)a\left(\frac{m(n-dm)}{r/d}\right)\\
&=\sum_{d\mid r}\sum_{m\in\mathbb{Z}}\sum_{\delta\mid \gcd(n,m)}\mu(\delta)\chi(\delta)\delta^{k-1}\chi_{0,r/d}(m)a(m/\delta)a\left(\frac{n-dm}{\delta r/d}\right)\\
&=\sum_{d\mid r}\sum_{\delta\mid n}\mu(\delta)\chi(\delta)\chi_{0,r/d}(\delta)\delta^{k-1}\sum_{m\in\mathbb{Z}}\chi_{0,r/d}(m)a(m)a\left(\frac{n/\delta-dm}{r/d}\right).
\end{align*}
Denote the $n$-th Fourier coefficient of $f_{\chi_{0,r/d}}(dz)f(rz/d)$ as $c_{d}(n)$. The above calculation shows that 
\[
\sum_{n\geq 1}b(n^2)n^{-s}=\sum_{d\mid r}L(s-k+1,\chi\chi_{0,r/d})^{-1}\sum_{n\geq 1}c_d(n)n^{-s}.
\]
It follows from the definition of the Shimura lift that
\[
\sum_{n\geq 1}A_1(n)n^{-s}=\sum_{d\mid r}\frac{L(s-k+1,\chi\chi_{0,4M})}{L(s-k+1,\chi\chi_{0,r/d})}\sum_{n\geq 1}c_d(n)n^{-s}.
\]
By Euler products of the corresponding $L$-functions, we have
\[
\frac{L(s-k+1,\chi\chi_{0,4M})}{L(s-k+1,\chi\chi_{0,r/d})}=\prod_{p\mid 2\operatorname{rad}(M)/\operatorname{rad}(r/d)}(1-\chi(p)p^{k-1-s}).
\]
Comparing the coefficients of both sides of the Dirichlet series completes the proof.

We would like to finish this paper with a few remarks about our results. The modular form $\left[f(4rz),\theta_{\psi}(tz)\right]_w$ is always a cusp form unless $w=0$, both $f(z)$ and $\theta_{\psi}(tz)$ are not cusp forms. At this point, the Shimura lift is not defined. However, the calculations above still hold, and we can extend the definition of Shimura lift to non-cuspidal modular forms in this way. This is precisely what is done in \cite{MR1736567}.

One can verify directly that the form $g_{\psi}(z)$ in Theorem \ref{Main_theorem} and \ref{theorem_plus} is of level $N^{'}r^2$ and the form in Theorem \ref{Main_theorem2} is of level $2M$ without relying on Shimura and Niwa's theorem (cf. \cite{MR2456838}). When the plus space of half-integral weight $S^{+}_{k+1/2}(4N)$ is spanned by Rankin-Cohen brackets of eigenforms and theta series, these results provide another proof of the fact that the level of modular forms after lifting is $N$. For example, when $k\geq 6$ is an even integer, the set $\left\{[G_{k-2j}(4z),\theta(z)]_j,1\leq j\leq  k/2-2\right\}$ forms a basis of $S_{k+1/4}^{+}(4)$. In this case, our theorem provides an explicit expression for the Shimura lift.

The result of this paper may be generalized to other  automorphic forms. It is worth noting that in 1992, Shemanske and Walling \cite{ShimuSheWal} have obtained results similar to Cipra's theorem regarding Hilbert modular forms.

\providecommand{\bysame}{\leavevmode\hbox to3em{\hrulefill}\thinspace}
\providecommand{\MR}{\relax\ifhmode\unskip\space\fi MR }
\providecommand{\MRhref}[2]{%
  \href{http://www.ams.org/mathscinet-getitem?mr=#1}{#2}
}
\providecommand{\href}[2]{#2}

\end{document}